\documentclass[12pt]{article}
\usepackage{amssymb}
\usepackage{amsmath}
\usepackage{amsthm}

\begin{document}
\newtheorem{Thm}{Theorem}[section]
\newtheorem{Def}{Definition}[section]
\newtheorem{Lem}{Lemma}[section]
\newtheorem{Prop}{Proposition}[section]
\newtheorem{Cor}{Corollary}[section]

\theoremstyle{definition}
\newtheorem*{remark}{Remark}

\newcommand{\Z}{{\mathbf Z}}
\newcommand{\R}{{\mathbf R}}
\newcommand{\Q}{{\mathbf Q}}
\newcommand{\C}{{\mathbf C}}
\newcommand{\N}{{\mathbf N}}
\newcommand{\eop}{\hfill$\square$}

\title{Depth reduction of a class of Witten zeta functions}
\author{Xia Zhou\thanks{The first and third authors are supported
by the National Natural Science Foundation of China, Project 10871169.}\\
\small Department of mathematics\\[-0.8ex]
\small Zhejiang University\\[-0.8ex]
\small Hangzhou, 310027\\[-0.8ex]
\small P.R.China\\
\small \texttt{xiazhou0821@hotmail.com}\\
\and
David~M. Bradley\\
\small Department of Mathematics \& Statistics\\[-0.8ex]
\small University of Maine\\[-0.8ex]
\small 5752 Neville Hall
         Orono, Maine 04469-5752\\[-0.8ex]
\small U.S.A\\
\small \texttt{bradley@math.umaine.edu, dbradley@member.ams.org}
\and
Tianxin Cai\\
\small Department of mathematics\\[-0.8ex]
\small Zhejiang University\\[-0.8ex]
\small Hangzhou, 310027\\[-0.8ex]
\small P.R.China\\
\small \texttt{caitianxin@hotmail.com}
}

\date{Submitted April 6, 2008; Accepted July 21, 2009; Published July 31, 2009\\
\small Mathematics Subject Classifications: 11A07, 11A63}

\maketitle
\begin{abstract}
We show that if $a,b,c,d,f$ are positive integers such that
$a+b+c+d+f$ is even, then the Witten zeta value
$\zeta_{\mathfrak{sl(4)}}(a,b,c,d,0,f)$ is expressible in terms of
Witten zeta functions with fewer arguments.
\end{abstract}

\section{Introduction}
Let $\N$ be the set of positive integers, $\Q$ the field of rational
numbers, $\C$ the field of complex numbers.

 For any semisimple Lie algebra $\mathfrak{g}$, the Witten zeta
function(cf.~\cite{MT}) is defined by
\begin{equation*}
\zeta_{\mathfrak{g}}(s)=\sum_{\rho}(\text{dim}\ \rho)^{-s},
\end{equation*}
where $s\in \C$ and $\rho$ runs over all finite dimensional
irreducible representations of $\mathfrak{g}$. In order the
calculate the volumes of certain moduli space, Witten~\cite{Witten}  introduced
the values $\zeta_{\mathfrak{g}}(2k)$ for $k\in \N$ and showed that
$\pi^{-2kl}\zeta_{\mathfrak{g}}(2k)\in \Q$, where $l$ is the number
of  positive roots of $\mathfrak{g}$.

For positive integer $r$, Matsumoto and Tsumura~\cite{MT} defined a
multi-variate extension, called the Witten multiple zeta-function
associated with $\mathfrak{sl}(r+1)$, by
\begin{equation}\label{MWit}
\zeta_{\mathfrak{sl}(r+1)}(\textbf{s})=\sum_{m_1, \dots,
m_r=1}^{\infty}\prod_{j=1}^r\prod_{k=1}^{r-j+1}\bigg(\sum_{v=k}^{j+k-1}m_v\bigg)^{-s_{j,k}}
\end{equation}
where \begin{equation*} \textbf{s}=(s_{j,k})_{1\leq j\leq r;\ 1\leq
k\leq r-j+1}\in \C^{r(r+1)/2}, \quad \Re (s_{j,k})>1.\end{equation*}
In particular (\cite{MT}, section 2, Proposition 2.1), if $m\in \N$
we denote
\[\zeta_{\mathfrak{sl}(r+1)}(2m):=\prod_{1\leq j<k\leq
r+1}(k-j)\zeta_{\mathfrak{sl}(r+1)}(\,\underbrace{2m,\dots,
2m}_{r(r+1)/2}\,).\] As in~\cite{BBBP}, given the Witten multiple
zeta-function~\eqref{MWit}, we define the \textit{depth} to be $r$. Further,
if the zeta functions $y_1,\dots,y_k$ have depth $r_1,\dots,r_k$
respectively, then for $a_1,\dots,a_k\in\C$, we define the \textit{depth} of
$a_1y_1+\cdots+\ a_ky_k$ to be $\max\{r_i : 1\leq i\leq k\}$. We
would like to know which sums can be expressed in terms of lower
depth sums. When a sum can be so expressed, we say it is \textit{reducible}.

An explicit evaluation for $\zeta_{\mathfrak{sl}(3)}(2m)$
$(m\in\N)$  was independently discovered by D.~Zagier,
S.~Garoufalidis, and L.~Weinstein (see \cite[page 506]{Zag}). In
\cite{Gun}, Gunnells and Sczech provided a generalization of the
continued-fraction algorithm to compute high-dimensional Dedekind
sums. As examples, they gave explicit evaluations of
$\zeta_{\mathfrak{sl}(3)}(2m)$ and $\zeta_{\mathfrak{sl}(4)}(2m)$.
Matsumoto and Tsumura~\cite{MT} considered functional relations for
Witten multiple zeta-functions, and found that
\begin{multline}\label{reducible}
 (-1)^a\zeta_{\mathfrak{sl(4)}}(s_1,s_2,a,s_3,0,b)+(-1)^b\zeta_{\mathfrak{sl(4)}}(s_1,s_2,b,s_3,0,a)\\
+\zeta_{\mathfrak{sl(4)}}(a,0,s_2,s_1,b,s_3)+\zeta_{\mathfrak{sl(4)}}(b,0,s_1,s_2,a,s_3)
\end{multline}
is reducible for any $a,b\in \N$ and $s_1, s_2, s_3\in \C$.

In this paper, we provide a combinatorial method which gives a
simpler formula for the quantity~\eqref{reducible}. Furthermore, we
show that if $a,b,c,d,f$ are positive integers such that $a+b+c+d+f$
is even, then $\zeta_{\mathfrak{sl(4)}}(a,b,c,d,0,f)$ is reducible.

\section{Functional relation}
\begin{Lem}\label{lem:2.1}
If the function $F:\Z_{\ge 0}\times \Z_{\ge 0}\times \C\to\C$
has the property that there exist $p,\ q\in \C$  such that for every  $a, b\in \N$ and every $s\in \C$ the relation
\[F(a,b,s)=pF(a-1,
b,s+1)+qF(a,b-1,s+1)\] holds, then for every  $a, b\in \N$ and every $s\in \C$,
\begin{align}\label{Decomp.}
F(a,b,s)=&\sum_{j=1}^{b}p^aq^{b-j}\binom{a+b-j-1}{a-1}F(0,j,a+b+s-j)\nonumber\\
+&\sum_{j=1}^{a}p^{a-j}q^{b}\binom{a+b-j-1}{b-1}F(j,0,a+b+s-j).
\end{align}
\end{Lem}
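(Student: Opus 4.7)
The plan is to prove \eqref{Decomp.} by induction on $n = a+b \ge 2$. The intuition is combinatorial: iterating the two-term recurrence replaces $F(a,b,s)$ by a sum over weighted lattice paths that start at $(a,b) \in \Z_{\ge 0}^2$ and, at each step, move west (weight $p$) or south (weight $q$) while incrementing the third argument by $1$; the iteration halts when a coordinate first reaches $0$, producing a boundary value $F(0,j,\cdot)$ or $F(j,0,\cdot)$. The two sums in \eqref{Decomp.} correspond exactly to paths terminating at $(0,j)$ for $1 \le j \le b$ and at $(j,0)$ for $1 \le j \le a$ respectively. A path first hitting $(0,j)$ consists of $a-1$ west-steps and $b-j$ south-steps in arbitrary order followed by a final west-step, of which there are $\binom{a+b-j-1}{a-1}$, with monomial weight $p^{a} q^{b-j}$; a symmetric count produces the second sum.

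To formalize this picture, I would first verify the base case $a=b=1$: a single application of the recurrence gives $F(1,1,s) = pF(0,1,s+1) + qF(1,0,s+1)$, which coincides with the only surviving ($j=1$) terms of each sum in \eqref{Decomp.}. For the inductive step with $a+b \ge 3$, I apply the recurrence to $F(a,b,s)$ and then invoke the inductive hypothesis on $F(a-1,b,s+1)$ (whenever $a \ge 2$) and on $F(a,b-1,s+1)$ (whenever $b \ge 2$), and regroup by boundary value. For $1 \le j \le b-1$ the coefficient of $F(0,j,a+b+s-j)$ receives contributions from both expansions, which combine via Pascal's identity
\[
\binom{a+b-j-2}{a-2} + \binom{a+b-j-2}{a-1} = \binom{a+b-j-1}{a-1},
\]
producing the coefficient prescribed by \eqref{Decomp.}; the $F(j,0,\ldots)$ coefficients combine symmetrically.

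The main point requiring care — and the only real obstacle — is the edge contributions. The $j=b$ term of the first sum (and symmetrically the $j=a$ term of the second) receives input from only one of the two inductive expansions, and in the degenerate cases $a=1$ or $b=1$ one of $F(a-1,b,s+1)$ or $F(a,b-1,s+1)$ is already a boundary value to which the inductive hypothesis does not apply. In each such case the unexpanded boundary term must be identified directly with the appropriate extreme term in \eqref{Decomp.}, whose binomial coefficient evaluates to $1$ and whose $p,q$-monomial matches by inspection. The combinatorial path picture provides exactly the bookkeeping needed to ensure every contribution — ordinary or boundary — is accounted for once, so all edge cases close cleanly and the induction concludes.
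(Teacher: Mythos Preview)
Your proof is correct and follows the same approach as the paper, namely induction on $a+b$; the paper's own proof consists solely of the sentence ``It's easy to prove Lemma~\ref{lem:2.1} by induction,'' so your account is far more detailed than the original. The lattice-path interpretation you supply is a pleasant bonus that makes the bookkeeping transparent, but the underlying mechanism---Pascal's identity plus careful handling of the $j=a$, $j=b$, $a=1$, and $b=1$ boundary cases---is precisely what the paper leaves implicit.
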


\begin{proof}
It's easy to prove Lemma~\ref{lem:2.1} by induction.
\end{proof}

The Euler sum of depth $r$ and weight $w$ is a multiple series of
the form
\begin{equation}\label{MZVdef}
   \zeta(s_1,\dots,s_r) := \sum_{n_1>\cdots>n_r>0}\; \prod_{j=1}^r
   n_j^{-s_j},
\end{equation}
with \textit{weight} $w:=s_1+\cdots+s_r$. Now let's recall the following result
concerning the reduction on the triple Euler sums.

\begin{Lem}[Borwein-Girgensohn~\cite{BG}]\label{lem:2.2}
Let $a,b,c$ be positive integers. If $a+b+c$ is even or less than or
equal to 10, then $\zeta(a,b,c)$ can be expressed as a rational
linear combination of products of single and double Euler sums of
weight $a+b+c$.
\end{Lem}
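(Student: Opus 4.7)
The plan is to exhibit enough linear relations among weight-$w$ triple Euler sums that every $\zeta(a,b,c)$ with $a\ge 2$ and $w=a+b+c$ either even or at most $10$ can be solved for in terms of products of single and double Euler sums. I would use two standard ``engines'': the stuffle and shuffle multiplications of multiple zeta values. Splitting the product of series $\zeta(a)\zeta(b,c)$ according to the ordering of summation indices gives the stuffle identity
\[
\zeta(a)\zeta(b,c) = \zeta(a,b,c) + \zeta(b,a,c) + \zeta(b,c,a) + \zeta(a+b,c) + \zeta(b,a+c),
\]
whereas writing each single and double zeta as an iterated integral over the standard simplex against the forms $\omega_0=dt/t$ and $\omega_1=dt/(1-t)$ and then shuffling the one-forms produces a second expansion of $\zeta(a)\zeta(b,c)$ as a $\Q$-linear combination of triple sums with binomial coefficients. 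Equating the two expansions yields homogeneous relations among triple zetas modulo depth-$\le 2$ terms.

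For even $w$ I would invoke the parity principle. Fixing the weight makes the double-shuffle relations a finite linear system; the crucial observation, which is the depth-three case of the parity theorem of Tsumura (and of Ihara--Kaneko--Zagier in related form), is that the substitution $t\mapsto 1-t$ in the iterated integral representation of $\zeta(a,b,c)$ introduces a sign $(-1)^{w}$. For even $w$ this produces an involution-compatibility that, combined with the shuffle/stuffle relations above and the sum theorem, forces the coefficient matrix to have full rank on the subspace of genuinely depth-three values of weight $w$. Consequently each $\zeta(a,b,c)$ of even weight is reducible.

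For $w\le 10$ the reduction is carried out case by case: one lists the finitely many triple sums of each such weight, assembles sufficiently many stuffle, shuffle, duality, and sum-theorem relations, and solves the resulting linear system explicitly by Gaussian elimination. The main obstacle will be the even-weight case, specifically the need to isolate each individual $\zeta(a,b,c)$ rather than only symmetric combinations such as $\zeta(a,b,c)+\zeta(c,b,a)$. The decisive input is the asymmetric binomial coefficients arising from the shuffle expansion: without them the stuffle relations alone produce only symmetric identities, and it is precisely the interplay between the symmetric stuffle and the asymmetric shuffle that breaks the degeneracy of depth-three even-weight sums and yields the explicit reducibility claimed in the lemma.
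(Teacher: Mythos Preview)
The paper does not give a proof of this lemma; it is simply quoted as a result of Borwein--Girgensohn~\cite{BG} and used as a black box in the proofs of Lemma~2.4 and Theorem~2.2. So there is no in-paper argument to compare your proposal against.

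As an independent sketch your outline is broadly in the right direction, but one step is not quite right. The substitution $t\mapsto 1-t$ in the iterated integral gives \emph{duality}, which maps a multiple zeta value to its dual MZV; it does not in general produce a global sign $(-1)^{w}$, and duality by itself does not yield the depth-three parity reduction. The parity theorem for depth three (Tsumura; cf.\ Ihara--Kaneko--Zagier) is usually obtained either through a generating-function/contour-integral argument or through the regularized double shuffle relations, and it is that argument---not a $t\mapsto 1-t$ symmetry---which shows the even-weight system has full rank modulo lower depth. Your jump from ``$(-1)^w$ compatibility'' to ``full rank of the coefficient matrix'' is exactly the nontrivial content of the parity theorem and would need to be supplied rather than asserted.

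For the historical record, Borwein--Girgensohn's original 1996 argument predates the systematic stuffle/shuffle machinery: they work with partial-fraction identities and recursions tied to Tornheim-type double sums, together with explicit computer-assisted linear algebra for small weights. Your double-shuffle/parity route is a legitimate and now more standard alternative, but it is a genuinely different proof from the one cited.
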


\begin{Lem}[Huard-Williams-Zhang~\cite{HWZ}]\label{lem:2.3}
If $a,b,c$ be positive integers, then
\begin{equation}\label{MT2}
\zeta_{\mathfrak{sl(3)}}(a,b,c)=\Bigg\{\sum_{j=1}^{a}\binom{a+b-j-1}{b-1}
   +\sum_{j=1}^{b}\binom{a+b-j-1}{a-1}\Bigg\}\zeta(a+b+c-j,j).
   \end{equation}
Moreover, $\zeta_{\mathfrak{sl(3)}}(a,b,c)$ can be explicitly
evaluated in terms of the values of Riemann zeta functions when
$a+b+c$ is odd.
\end{Lem}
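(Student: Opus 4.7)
The plan is to invoke Lemma~\ref{lem:2.1} with a judiciously chosen function. I would set
$$F(a,b,s) := \sum_{m,n\ge 1}\frac{1}{m^a n^b (m+n)^s},$$
so that $F(a,b,c)=\zeta_{\mathfrak{sl(3)}}(a,b,c)$ by the definition~\eqref{MWit} specialised to $r=2$. The first step is to verify the hypothesis of Lemma~\ref{lem:2.1} with $p=q=1$. This follows from the trivial splitting
$$\frac{1}{m^{a}n^{b}(m+n)^{s}} = \frac{m+n}{m^{a}n^{b}(m+n)^{s+1}} = \frac{1}{m^{a-1}n^{b}(m+n)^{s+1}} + \frac{1}{m^{a}n^{b-1}(m+n)^{s+1}},$$
which, summed over $m,n\ge 1$ in the region of absolute convergence, gives the required recursion $F(a,b,s)=F(a-1,b,s+1)+F(a,b-1,s+1)$.

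Next I would compute the boundary values $F(0,j,s)$ and $F(j,0,s)$. The change of variables $k=m+n$ yields
$$F(0,j,s) = \sum_{n\ge 1}\sum_{m\ge 1}\frac{1}{n^{j}(m+n)^{s}} = \sum_{k>n\ge 1}\frac{1}{k^{s}n^{j}} = \zeta(s,j)$$
by the convention~\eqref{MZVdef}, and the $(m,n)\leftrightarrow(n,m)$ symmetry gives $F(j,0,s)=\zeta(s,j)$ as well. Substituting $p=q=1$ together with these boundary evaluations into~\eqref{Decomp.}, with $s$ replaced by $c$, then produces the claimed identity~\eqref{MT2} directly.

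For the final assertion, the identity just established writes $\zeta_{\mathfrak{sl(3)}}(a,b,c)$ as a rational linear combination of double Euler sums $\zeta(a+b+c-j,j)$, all of weight $a+b+c$. When $a+b+c$ is odd, the classical Euler reduction of odd-weight double zeta values expresses each such $\zeta(s_{1},s_{2})$ as a polynomial in single Riemann zeta values, whence the same holds for $\zeta_{\mathfrak{sl(3)}}(a,b,c)$. No step in this argument is really delicate; the only point requiring attention is that the inner range $m\ge 1$ in the change of variables $k=m+n$ must be translated into the strict inequality $k>n$, so that the boundary evaluation matches the convention~\eqref{MZVdef} precisely rather than a stuffle variant.
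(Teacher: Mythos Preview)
The paper does not prove this lemma itself; it is quoted from Huard--Williams--Zhang~\cite{HWZ} without argument. Your proof is correct and is, in fact, the natural application of the paper's own Lemma~\ref{lem:2.1}: writing $m+n$ in the numerator gives the recursion $F(a,b,s)=F(a-1,b,s+1)+F(a,b-1,s+1)$ with $p=q=1$, the substitution $k=m+n$ identifies each boundary value $F(0,j,s)=F(j,0,s)$ with the double zeta value $\zeta(s,j)$ under the convention~\eqref{MZVdef}, and~\eqref{Decomp.} with $s=c$ then yields~\eqref{MT2} directly. The closing appeal to Euler's reduction of odd-weight double zeta values is the standard route for the final clause. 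The only cosmetic point is that Lemma~\ref{lem:2.1} formally asks for $F$ to be defined on all of $\Z_{\ge 0}\times\Z_{\ge 0}\times\C$, whereas your series for $F(0,0,s)$ diverges when $\Re(s)\le 2$; since neither the recursion hypothesis nor the conclusion~\eqref{Decomp.} ever involves $F(0,0,\cdot)$, this is immaterial.
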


\begin{Thm}\label{thm:2.1} If $a,b\in \N$, then
\begin{align}
&(-1)^a\zeta_{\mathfrak{sl(4)}}(s_1,s_2,a,s_3,0,b)+(-1)^b\zeta_{\mathfrak{sl(4)}}(s_1,s_2,b,s_3,0,a)\notag\\
&\hspace{3cm}+\zeta_{\mathfrak{sl(4)}}(a,0,s_2,s_1,b,s_3)+\zeta_{\mathfrak{sl(4)}}(b,0,s_1,s_2,a,s_3)\notag\\
&=\sum_{i=1}^{\max(a,b)}\Bigg\{\binom{a+b-i-1}{a-1}+\binom{a+b-i-1}{b-1}\Bigg\}(-1)^{i}\zeta(i)\notag\\
&\hspace{3cm}\times\zeta_{\mathfrak{sl(3)}}(s_1,s_2,s_3+a+b-i)\notag\\
&\hspace{1cm}+\sum_{i=1}^{a}\binom{a+b-i-1}{b-1}\Bigg\{\zeta(i)\zeta_{\mathfrak{sl(3)}}(s_1,s_2,s_3+a+b-i)\notag\\
&\hspace{2.5cm}-\zeta_{\mathfrak{sl(3)}}(s_1+i,s_2,s_3+a+b-i)-\zeta_{\mathfrak{sl(3)}}(s_1,s_2,s_3+a+b)\Bigg\}\notag\\
&\hspace{1cm}+\sum_{i=1}^{b}\binom{a+b-i-1}{a-1}\Bigg\{\zeta(i)\zeta_{\mathfrak{sl(3)}}(s_1,s_2,s_3+a+b-i)\notag\\
&\hspace{2.5cm}-\zeta_{\mathfrak{sl(3)}}(s_2+i,s_1,s_3+a+b-i)-\zeta_{\mathfrak{sl(3)}}(s_1,s_2,s_3+a+b)\Bigg\}.
\end{align}
\end{Thm}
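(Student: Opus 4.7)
The plan is to apply Lemma~\ref{lem:2.1} to two carefully chosen auxiliary functions — one for each pair of Witten values on the left-hand side — and then to show, via a change of summation variables, that the resulting three-dimensional sums reassemble into the right-hand side. Throughout, set $u_j:=s_3+a+b-j$ and
\[[\mathrm{FS}_j]\;:=\sum_{\substack{l,m,n\geq 1\\ n<l+m}} l^{-s_1}m^{-s_2}n^{-j}(l+m)^{-u_j}.\]

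For the first pair, set
\[F_1(a,b,s)\;:=\;\sum_{l,m,n\geq 1}\frac{l^{-s_1}m^{-s_2}(l+m)^{-s}}{n^a(l+m+n)^b}.\]
The identity $(l+m+n)-n=l+m$ shows that $F_1$ satisfies the recursion of Lemma~\ref{lem:2.1} with $p=-1$, $q=1$. Applied to $F_1(a,b,s_3)$ and $F_1(b,a,s_3)$ with weights $(-1)^a$ and $(-1)^b$, this yields
\[(-1)^a\zeta_{\mathfrak{sl(4)}}(s_1,s_2,a,s_3,0,b)+(-1)^b\zeta_{\mathfrak{sl(4)}}(s_1,s_2,b,s_3,0,a)=\sum_{j=1}^{\max(a,b)}C_j\bigl\{F_1(0,j,u_j)+(-1)^j F_1(j,0,u_j)\bigr\},\]
with $C_j:=\binom{a+b-j-1}{a-1}+\binom{a+b-j-1}{b-1}$ (using the convention $\binom{n}{k}=0$ for $n<k$). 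The factor $F_1(j,0,u_j)=\zeta(j)\zeta_{\mathfrak{sl(3)}}(s_1,s_2,u_j)$ splits cleanly and produces the first line on the right-hand side.

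For the second pair, the identity $l+(m+n)=l+m+n$ makes Lemma~\ref{lem:2.1} with $p=q=1$ applicable: decompose $1/(l^a(m+n)^b)$ inside $B:=\zeta_{\mathfrak{sl(4)}}(a,0,s_2,s_1,b,s_3)$ and $1/(l^b(m+n)^a)$ inside $B':=\zeta_{\mathfrak{sl(4)}}(b,0,s_1,s_2,a,s_3)$. This expresses $B$ and $B'$ as linear combinations of four three-dimensional auxiliary sums $T_B^1,T_B^2,T_{B'}^1,T_{B'}^2$. The pivotal computation is that after a reindexing of $(l,m,n)$, each of these equals a restricted version of $[\mathrm{FS}_j]$, the restrictions being $n<l$, $m<n<l+m$, $n<m$, and $l<n<l+m$ respectively. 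The boundary contributions from $\{n=l\}$ and $\{n=m\}$ produce $\zeta_{\mathfrak{sl(3)}}(s_1+j,s_2,u_j)$ and $\zeta_{\mathfrak{sl(3)}}(s_2+j,s_1,u_j)$; combining via the disjoint partitions $\{n<l+m\}=\{n<l\}\sqcup\{n=l\}\sqcup\{l<n<l+m\}=\{n<m\}\sqcup\{n=m\}\sqcup\{m<n<l+m\}$ gives
\[T_B^1+T_{B'}^2=[\mathrm{FS}_j]-\zeta_{\mathfrak{sl(3)}}(s_1+j,s_2,u_j),\qquad T_B^2+T_{B'}^1=[\mathrm{FS}_j]-\zeta_{\mathfrak{sl(3)}}(s_2+j,s_1,u_j).\]

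Finally, a stuffle-type decomposition of $\zeta(j)\zeta_{\mathfrak{sl(3)}}(s_1,s_2,u_j)$ according to whether the free index is $<l+m$, $=l+m$, or $>l+m$ gives
\[F_1(0,j,u_j)=\zeta(j)\zeta_{\mathfrak{sl(3)}}(s_1,s_2,u_j)-\zeta_{\mathfrak{sl(3)}}(s_1,s_2,s_3+a+b)-[\mathrm{FS}_j].\]
Substituting everything and adding, the $[\mathrm{FS}_j]$ pieces from the two pairs cancel exactly, and the hockey-stick identity $\sum_{j=1}^a\binom{a+b-j-1}{b-1}+\sum_{j=1}^b\binom{a+b-j-1}{a-1}=\binom{a+b}{a}$ produces the correct coefficient of $\zeta_{\mathfrak{sl(3)}}(s_1,s_2,s_3+a+b)$; the rest falls into the three sums on the right-hand side. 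The hardest step is the reindexing of the four $T$-sums as restricted sub-sums of the common object $[\mathrm{FS}_j]$ — once it is in place, the collapse is forced by the disjoint partition above. A secondary technical issue is that for $j=1$ individual pieces such as $\zeta(1)$ and $F_1(0,1,u)$ diverge; these divergences cancel in the combination, and a standard cutoff-and-limit argument legitimises the formal manipulations.
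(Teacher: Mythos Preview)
Your proof is correct and follows essentially the same route as the paper's: both apply Lemma~\ref{lem:2.1} to each of the four $\zeta_{\mathfrak{sl}(4)}$ terms (with $(p,q)=(-1,1)$ for the first pair and $(p,q)=(1,1)$ for the second), reindex the resulting pieces as sums over a free index $n$ restricted to the ranges $n<l$, $l<n<l+m$, $n<m$, $m<n<l+m$, $n>l+m$, and recombine via the obvious partition of $\mathbf{N}$. The only organisational difference is that you isolate the common object $[\mathrm{FS}_j]$ and cancel it explicitly, whereas the paper packages the same cancellation into the two three-term identities (16)--(17); your remark about the hockey-stick identity is not actually needed, since the coefficient $C_j$ of $\zeta_{\mathfrak{sl}(3)}(s_1,s_2,s_3+a+b)$ already splits naturally into the two binomials appearing in the statement.
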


\begin{proof} From the definition~\eqref{MWit} of the Witten multiple
zeta-function, we have
\begin{equation}
\zeta_{\mathfrak{sl(4)}}(s_1,s_2,s_3,s_4,s_5,s_6)=\zeta_{\mathfrak{sl(4)}}(s_3,s_2,s_1,s_5,s_4,s_6).
\end{equation}
Next, for any $a,b\in \N$ and $s_1,s_2,s_3\in \C$, since
\begin{multline*}
   \zeta_{\mathfrak{sl(4)}}(s_1,s_2,a,s_3,0,b)=\zeta_{\mathfrak{sl(4)}}(s_1,s_2,a,s_3+1,0,b-1)\\
   -\zeta_{\mathfrak{sl(4)}}(s_1,s_2,a-1,s_3+1,0,b),
\end{multline*}
by Lemma~\ref{lem:2.1}, we have
\begin{align}
\zeta_{\mathfrak{sl(4)}}(s_1,s_2,a,s_3,0,b)&=\sum_{i=1}^{a}\binom{a+b-i-1}{b-1}(-1)^{a+i}\zeta_{\mathfrak{sl(4)}}(s_1,s_2,i,s_3+a+b-i,0,0)\notag\\
&+\sum_{i=1}^{b}\binom{a+b-i-1}{a-1}(-1)^{a}\zeta_{\mathfrak{sl(4)}}(s_1,s_2,0,s_3+a+b-i,0,i).
\end{align}
Similarly, we have
\begin{align}
\zeta_{\mathfrak{sl(4)}}(s_1,s_2,b,s_3,0,a)&=\sum_{i=1}^{b}\binom{a+b-i-1}{a-1}(-1)^{b+i}\zeta_{\mathfrak{sl(4)}}(s_1,s_2,i,s_3+a+b-i,0,0)\notag\\
&+\sum_{i=1}^{a}\binom{a+b-i-1}{b-1}(-1)^{b}\zeta_{\mathfrak{sl(4)}}(s_1,s_2,0,s_3+a+b-i,0,i),
\end{align}
\begin{align}
\zeta_{\mathfrak{sl(4)}}(a,0,s_2,s_1,b,s_3)&=\sum_{i=1}^{a}\binom{a+b-i-1}{b-1}\zeta_{\mathfrak{sl(4)}}(i,0,s_2,s_1,0,s_3+a+b-i)\notag\\
&+\sum_{i=1}^{b}\binom{a+b-i-1}{a-1}\zeta_{\mathfrak{sl(4)}}(0,0,s_2,s_1,i,s_3+a+b-i),
\end{align}
and
\begin{align}
\zeta_{\mathfrak{sl(4)}}(b,0,s_1,s_2,a,s_3)&=\sum_{i=1}^{b}\binom{a+b-i-1}{a-1}\zeta_{\mathfrak{sl(4)}}(i,0,s_1,s_2,0,s_3+a+b-i)\notag\\
&+\sum_{i=1}^{a}\binom{a+b-i-1}{b-1}\zeta_{\mathfrak{sl(4)}}(0,0,s_1,s_2,i,s_3+a+b-i).
\end{align}
Since
\begin{align}
\zeta_{\mathfrak{sl(4)}}(a,b,c,d,0,0)&=\zeta(c)\zeta_{\mathfrak{sl(3)}}(a,b,d),\\
\zeta_{\mathfrak{sl(4)}}(a,b,0,c,0,d)&=\sum_{\substack{n_1,n_2=1\\v>n_1+n_2}}\frac{1}{v^dn_1^{a}n_2^{b}(n_1+n_2)^{c}}\notag\\
&=\sum_{\substack{n_1,n_2=1\\v>n_1+n_2}}\frac{1}{v^dn_1^{b}n_2^{a}(n_1+n_2)^{c}},\\
\zeta_{\mathfrak{sl(4)}}(a,0,b,c,0,d)&=\sum_{\substack{n_1,n_2=1\\v<n_1}}\frac{1}{v^an_1^cn_2^b(n_1+n_2)^d},\\
\zeta_{\mathfrak{sl(4)}}(0,0,a,b,c,d)&=\sum_{\substack{n_1,n_2=1\\n_1+n_2>v>n_1}}\frac{1}{v^cn_1^an_2^{b}(n_1+n_2)^d},
\end{align}
we find that
\begin{align}
&\zeta_{\mathfrak{sl(4)}}(s_1,s_2,0,s_3+a+b-i,0,i)+\zeta_{\mathfrak{sl(4)}}(i,0,s_2,s_1,0,s_3+a+b-i)\nonumber\\
&\hspace{4cm}+\zeta_{\mathfrak{sl(4)}}(0,0,s_1,s_2,i,s_3+a+b-i)\nonumber\\
&=\zeta(i)\zeta_{\mathfrak{sl(3)}}(s_1,s_2,s_3+a+b-i)-\zeta_{\mathfrak{sl(3)}}(s_1+i,s_2,s_3+a+b-i)\nonumber\\
&\hspace{4cm}-\zeta_{\mathfrak{sl(3)}}(s_1,s_2,s_3+a+b)
\end{align}
and
\begin{align}
&\zeta_{\mathfrak{sl(4)}}(s_1,s_2,0,s_3+a+b-i,0,i)+\zeta_{\mathfrak{sl(4)}}(i,0,s_1,s_2,0,s_3+a+b-i)\notag\\
&\hspace{4cm}+\zeta_{\mathfrak{sl(4)}}(0,0,s_2,s_1,i,s_3+a+b-i)\notag\\
&=\zeta(i)\zeta_{\mathfrak{sl(3)}}(s_1,s_2,s_3+a+b-i)-\zeta_{\mathfrak{sl(3)}}(s_2+i,s_1,s_3+a+b-i)\notag\\
&\hspace{4cm}-\zeta_{\mathfrak{sl(3)}}(s_1,s_2,s_3+a+b)
\end{align}
Now combining equations (8-17), we complete
the proof.
\end{proof}

\begin{Lem}\label{lem:2.4} Every Witten multiple zeta value of the form $\zeta_{\mathfrak{sl(4)}}(a,b,1,d,0,1)$ with
$a,b,d\in \N$ can be expressed as a rational linear combination of
products of single and double Euler sums when $a+b+d$ is even or $a+b+d\le 8$.
\end{Lem}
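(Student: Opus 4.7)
The plan is to collapse the two exponent-one positions of $\zeta_{\mathfrak{sl(4)}}(a,b,1,d,0,1)$ by partial fractions, expressing it as a rational combination of a single $\zeta_{\mathfrak{sl(3)}}$ value and ordinary triple Euler sums of weight $a+b+d+2$, and then to appeal to Lemmas~\ref{lem:2.3} and~\ref{lem:2.2}. Theorem~\ref{thm:2.1} is not directly needed; the two exponent-one positions can be handled by hand.

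First I would apply the partial fraction
\begin{equation*}
\frac{1}{m_3(m_1+m_2+m_3)} = \frac{1}{m_1+m_2}\left(\frac{1}{m_3} - \frac{1}{m_1+m_2+m_3}\right)
\end{equation*}
and sum the telescoping tail over $m_3\ge 1$ to obtain
\begin{equation*}
\zeta_{\mathfrak{sl(4)}}(a,b,1,d,0,1) = \sum_{m_1,m_2\ge 1}\frac{H_{m_1+m_2}}{m_1^{a}m_2^{b}(m_1+m_2)^{d+1}},
\end{equation*}
where $H_{N}:=\sum_{k=1}^{N}1/k$. Writing $H_{m_1+m_2} = H_{m_1+m_2-1} + 1/(m_1+m_2)$ isolates the summand $\zeta_{\mathfrak{sl(3)}}(a,b,d+2)$, which Lemma~\ref{lem:2.3} reduces to double Euler sums.

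For the residual contribution I would introduce an auxiliary summation variable $k<m_1+m_2$ via $H_{m_1+m_2-1}=\sum_{k<m_1+m_2}1/k$, substitute $n_1 = m_1$, $n_2 = m_1+m_2$, and apply the partial-fraction identity
\begin{equation*}
\frac{1}{n_1^{a}(n_2-n_1)^{b}} = \sum_{i=1}^{a}\binom{a+b-i-1}{b-1}\frac{1}{n_1^{i}n_2^{a+b-i}} + \sum_{i=1}^{b}\binom{a+b-i-1}{a-1}\frac{1}{(n_2-n_1)^{i}n_2^{a+b-i}}
\end{equation*}
underlying Lemma~\ref{lem:2.3}. The sums over $n_1$ (respectively $n_2-n_1$) and $k$ then decouple for fixed $n_2$, leaving inner factors of the form $H_{n_2-1}H_{n_2-1}^{(i)}$, where $H_{n}^{(i)}:=\sum_{k=1}^{n}1/k^{i}$.

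The one identity requiring direct computation is
\begin{equation*}
\sum_{n\ge 2}\frac{H_{n-1}H_{n-1}^{(i)}}{n^{s}} = \zeta(s,i,1) + \zeta(s,1,i) + \zeta(s,i+1),
\end{equation*}
which follows by splitting $\sum_{k_1,k_2<n}1/(k_1 k_2^{i})$ according to whether $k_1<k_2$, $k_1>k_2$, or $k_1=k_2$. Each resulting triple or double Euler sum has total weight $a+b+d+2$. Under the hypothesis that $a+b+d$ is even or $a+b+d\le 8$, this weight is even or at most $10$, so Lemma~\ref{lem:2.2} expresses each triple Euler sum as a rational combination of products of single and double Euler sums, completing the reduction. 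I expect the main obstacle to be the bookkeeping in the double partial-fraction expansion and the verification that each triple sum produced has the common weight $a+b+d+2$; the analytic content is minimal, since every series in sight is positive-term and Fubini justifies all rearrangements.
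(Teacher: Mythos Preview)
Your argument is correct and, at its core, follows the same strategy as the paper: iterate partial fractions to reduce $\zeta_{\mathfrak{sl(4)}}(a,b,1,d,0,1)$ to a rational combination of triple (and double) Euler sums of weight $a+b+d+2$, then invoke Lemma~\ref{lem:2.2}. The difference is purely organizational. The paper first applies the Tornheim partial-fraction expansion in the $(a,b)$ slots (via Lemma~\ref{lem:2.1}) to reduce to $\zeta_{\mathfrak{sl(4)}}(i,0,1,\ast,0,1)$, and only afterward unwinds the $m_3$ and $m_1+m_2+m_3$ factors into triple Euler sums. You reverse the order: you first telescope $\sum_{m_3}\frac{1}{m_3(m_1+m_2+m_3)}=\frac{H_{m_1+m_2}}{m_1+m_2}$, and only then apply the $(a,b)$ partial fraction, which leads to the clean intermediate factor $H_{n-1}H_{n-1}^{(i)}$ and the quasi-shuffle identity $H_{n-1}H_{n-1}^{(i)}=\sum_{k_1<k_2<n}+\sum_{k_2<k_1<n}+\sum_{k_1=k_2<n}$. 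Both routes produce the same pool of triple sums $\zeta(s,i,1)$, $\zeta(s,1,i)$ with $s+i+1=a+b+d+2$; your packaging via harmonic products is slightly more transparent about why every term lands at the common weight, while the paper's version stays closer to the recursive framework of Lemma~\ref{lem:2.1} used elsewhere.
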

\begin{proof}
\begin{align}
\zeta_{\mathfrak{sl(4)}}(a,b,1,d,0,1)&=\sum_{i=1}^{a}\binom{a+b-i-1}{b-1}\zeta_{\mathfrak{sl(4)}}(i,0,1,a+b+d-i,0,1)\notag\\
&+\sum_{i=1}^{b}\binom{a+b-i-1}{a-1}\zeta_{\mathfrak{sl(4)}}(0,i,1,a+b+d-i,0,1).
\end{align}
However, for any $a,d\in \N$,
\begin{align}
\zeta_{\mathfrak{sl(4)}}(a,0,1,d,0,1)&=\zeta_{\mathfrak{sl(4)}}(0,a,1,d,0,1)\notag\\
&\hspace{-1cm}=\zeta_{\mathfrak{sl(4)}}(a,0,1,0,0,d+1)+\sum_{i=1}^{d}\zeta(d+2-i,i,a),
\end{align}
and
\begin{align}\label{19}
\zeta_{\mathfrak{sl(4)}}(a,0,1,0,0,d+1)&=\zeta(d+1,a,1)+\sum_{i=1}^{a}\zeta(d+1,a+1-i,i).
\end{align}
We complete the proof by combining this with Lemma 2.2.
\end{proof}
\begin{Thm}\label{thm:2.2} Every Witten multiple zeta value of the form $\zeta_{\mathfrak{sl(4)}}(a,b,c,d,0,f)$   with $a,b,c,d,f,\in \N$
 can be expressed as a rational linear combination of products of
single and double Euler sums when  $a+b+c+d+f$ is even or
$a+b+c+d+f\leq 10$.
\end{Thm}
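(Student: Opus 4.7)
The plan is to reduce $\zeta_{\mathfrak{sl(4)}}(a,b,c,d,0,f)$ to a $\Q$-linear combination of triple Euler sums of weight $a+b+c+d+f$, and then invoke Lemma~\ref{lem:2.2}.

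First, I apply the classical partial-fraction identity for $1/(m_1^a m_2^b)$ with respect to $N=m_1+m_2$ inside the defining series, exactly as in the proof of Lemma~\ref{lem:2.4} at equation~(18). Combined with the summand symmetry $\zeta_{\mathfrak{sl(4)}}(k,0,c,t,0,f)=\zeta_{\mathfrak{sl(4)}}(0,k,c,t,0,f)$ (visible on swapping $m_1$ and $m_2$), this expresses $\zeta_{\mathfrak{sl(4)}}(a,b,c,d,0,f)$ as a $\Q$-linear combination of values $\zeta_{\mathfrak{sl(4)}}(k,0,c,t,0,f)$ with $k+t=a+b+d$, $1\le k\le \max(a,b)$, and $t\ge 2$.

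Second, I reduce each $\zeta_{\mathfrak{sl(4)}}(k,0,c,t,0,f)$ to triple Euler sums by induction on $c$. The key algebraic identity is the telescoping
\[
\frac{1}{N^t M^f}=\frac{1}{M^{t+f}}+(M-N)\sum_{i=0}^{t-1}\frac{1}{N^{t-i}M^{i+1+f}},
\]
coming from $M^t-N^t=(M-N)\sum_{i=0}^{t-1}M^{t-1-i}N^{i}$. Writing $N=m_1+m_2$, $M=m_1+m_2+m_3$, and $M-N=m_3$, multiplying by $1/(m_1^k m_3^c)$ and summing over $m_1,m_2,m_3\ge 1$ gives
\[
\zeta_{\mathfrak{sl(4)}}(k,0,c,t,0,f)=\zeta_{\mathfrak{sl(4)}}(k,0,c,0,0,t+f)+\sum_{i=0}^{t-1}\zeta_{\mathfrak{sl(4)}}(k,0,c-1,t-i,0,i+1+f).
\]
In the base case $c=1$, each summand $\zeta_{\mathfrak{sl(4)}}(k,0,0,t-i,0,i+1+f)$ collapses under the free $m_3$-summation to the triple Euler sum $\zeta(i+1+f,\,t-i,\,k)$; for $c\ge 2$ the induction hypothesis applies. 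The remaining ``corner'' term $\zeta_{\mathfrak{sl(4)}}(k,0,c,0,0,t+f)$ is handled by the same partial-fraction trick as in Step~1, now applied to $1/(m_1^k m_3^c)$ with respect to $r=m_1+m_3$; after summing over $M>r$, this yields a $\Q$-linear combination of triple Euler sums $\zeta(t+f,\,k+c-j,\,j)$. The $c=1$ instance of this ``corner'' step recovers formula~(20) of the paper.

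Finally, every triple Euler sum produced has weight exactly $a+b+c+d+f$, so Lemma~\ref{lem:2.2} expresses each as a $\Q$-linear combination of products of single and double Euler sums when $a+b+c+d+f$ is even or at most $10$. Assembling the three steps proves the theorem. The main obstacle is the bookkeeping for the double induction in Step~2; convergence of every emerging triple Euler sum is automatic, because in each inductive step the sixth argument grows (from $f$ to $i+1+f$), so the leading exponent, either $t+f$ or $i+1+f$, is always $\ge 2$.
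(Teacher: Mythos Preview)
Your argument is correct, and Step~1 coincides with what the paper does in equation~(22). The difference lies in Step~2. The paper applies Lemma~2.1 once more, this time to the pair $(n_3,\,n_1+n_2+n_3)$ with pivot $n_1+n_2$, obtaining the closed-form decomposition~(21); summing then yields the explicit formula~(23), in which the only surviving depth-$3$ term is the boundary value $\zeta_{\mathfrak{sl(4)}}(a,b,1,c+d+f-2,0,1)$, handled separately by Lemma~2.4 (and the $\zeta_{\mathfrak{sl}(3)}$ pieces by Lemma~2.3). You instead use the geometric-series telescope $M^t-N^t=(M-N)\sum_i M^iN^{t-1-i}$ and induct on $c$, peeling off at each stage a ``corner'' $\zeta_{\mathfrak{sl(4)}}(k,0,c',0,0,\cdot)$ which you resolve by a second partial-fraction step in $(m_1,m_3)$. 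What your route buys is uniformity: you never need Lemma~2.4 or Lemma~2.3, since everything lands directly among triple Euler sums of the right weight, and there is no divergent $\zeta(1)$ factor to isolate. What the paper's route buys is an explicit one-line identity~(23) rather than an inductive recipe, at the cost of the two auxiliary lemmas. Both approaches ultimately rest on Lemma~2.2 in the same way.
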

\begin{proof}
From Lemma~\ref{lem:2.1}, we see that
\begin{align}\label{20}
&\frac{1}{n_1^an_2^bn_3^c(n_1+n_2)^d(n_1+n_2+n_3)^f}\notag\\
&\hspace{2cm}=\sum_{i=1}^{c}\binom{c+f-i-1}{f-1}(-1)^{c+i}\frac{1}{n_1^an_2^bn_3^i(n_1+n_2)^{c+d+f-i}}\notag\\
&\hspace{2cm}+\sum_{i=1}^{f}\binom{c+f-i-1}{c-1}(-1)^{c}\frac{1}{n_1^an_2^b(n_1+n_2)^{c+d+f-i}(n_1+n_2+n_3)^i}.
\end{align}
Also
\begin{align}&\frac{1}{n_1^an_2^b(n_1+n_2)^{c+d+f-i}(n_1+n_2+n_3)^i}\notag\\
&\hspace{2cm}=\sum_{j=1}^{a}\binom{a+b-j-1}{b-1}\frac{1}{n_1^j(n_1+n_2)^{a+b+c+d+f-i-j}(n_1+n_2+n_3)^i}\notag\\
&\hspace{2cm}+\sum_{j=1}^{b}\binom{a+b-j-1}{a-1}\frac{1}{n_2^j(n_1+n_2)^{a+b+c+d+f-i-j}(n_1+n_2+n_3)^i}.
\end{align}
Now combine~\eqref{19},~\eqref{20} and Lemma~\ref{lem:2.4} and sum
over all ordered triples of positive integers $(n_1, n_2, n_3)$ to
obtain
\begin{align}
&\zeta_{\mathfrak{sl(4)}}(a,b,c,d,0,f)=\sum_{i=2}^{c}\binom{c+f-i-1}{f-1}(-1)^{c+i}\zeta(i)\zeta_{\mathfrak{sl}(3)}(a,b,c+d+f-i)\notag\\
&\hspace{1.5cm}+\sum_{i=2}^{f}\binom{c+f-i-1}{c-1}(-1)^{c}\Bigg\{\sum_{j=1}^{a}\binom{a+b+j-1}{b-1}\notag\\
&\hspace{4cm}\times\zeta(i,c+d+f+a+b-i-j,j)\notag\\
&\hspace{3cm}+\sum_{j=1}^{b}\binom{a+b+j-1}{a-1}\zeta(i,c+d+f+a+b-i-j,j)\Bigg\}\notag\\
&\hspace{3cm}-(-1)^{c}\binom{c+f-2}{c-1}\zeta_{\mathfrak{sl(4)}}(a,b,1,c+d+f-2,0,1).
\end{align}
By Lemmas~\ref{lem:2.2},~\ref{lem:2.3} and~\ref{lem:2.4}, we complete the proof.
\end{proof}

\begin{remark}
When $d=0$, the Witten zeta value
$\zeta_{\mathfrak{sl(4)}}(a,b,c,0,0,f)$ can also be viewed as a
Mordell-Tornheim sum with depth 3. The fact that every such sum can
be expressed as a rational linear combination of products of single
and double Euler sums when the weight $a+b+c+f$ is even has been
shown in~\cite{Tsu} and~\cite{ZBC}.
\end{remark}

\noindent\textbf{Acknowledgment.} The authors are grateful to the referee for carefully reading the manuscript and providing several constructive suggestions.

\end{document}